\numberwithin{equation}{section}
\newcommand{\qtq}[1]{\quad\text{#1}\quad}
\newcommand{\R}{\mathbb{R}}
\newcommand{\C}{\mathbb{C}}
\newcommand{\eps}{\varepsilon}
\newtheorem{theorem}{Theorem}[section]
\newtheorem{lemma}[theorem]{Lemma}
\newtheorem{corollary}[theorem]{Corollary}
\newtheorem{proposition}[theorem]{Proposition}
\theoremstyle{definition}
\newtheorem{definition}[theorem]{Definition}
\newtheorem{remark}[theorem]{Remark}
\theoremstyle{remark}
\begin{document}

\title[Dispersion-managed NLS]{Averaging for the dispersion-managed NLS}

\author{Luccas Campos}
\address{IMECC, State University of Campinas (UNICAMP), Campinas, SP, Brazil}
\email{luccas@ime.unicamp.br}
\author{Jason Murphy} 
\address{Missouri University of Science \& Technology}
\email{jason.murphy@mst.edu}
\author{Tim Van Hoose}
\address{Missouri University of Science \& Technology}
\email{trvkdb@mst.edu}

\begin{abstract} We establish global-in-time averaging for the $L^2$-critical dispersion-managed nonlinear Schr\"odinger equation in the fast dispersion management regime.  In particular, in the case of nonzero average dispersion, we establish averaging with any subcritical data, while in the case of a strictly positive dispersion map, we obtain averaging for data in $L^2$. 
\end{abstract}

\maketitle

\section{Introduction}

We consider dispersion-managed nonlinear Schr\"odinger equations (DMNLS) of the form 
\begin{equation}\label{DMNLS}
i\partial_t u + \gamma(t)\Delta u = \mu |u|^p u,\quad (t,x)\in\R\times\R^d,
\end{equation}
where the dispersion map $\gamma:\R\to\R$ is a $1$-periodic function.  This model arises in nonlinear fiber optics, particularly when $\gamma$ is taken to be piecewise constant (see, for example, \cite{Agrawal, Kurtzke}).  Equations of the form \eqref{DMNLS} (along with some variants) have been the topic of some recent mathematical interest; we refer the reader to \cite{AntonelliSautSparber, CHL, CL, EHL, Fanelli, GT1, GT2, GMO, HL, HL2, MV, MVH, PZ, ZGJT} for a representative sample of results.  Throughout this note, we will focus on the case of the cubic NLS in two space dimensions, which is of particular interest in the optics setting.  In the case of constant dispersion, this leads to an $L^2$-critical problem (that is, the scaling symmetry that preserves the class of solutions also preserves the $L^2$-norm), although (as we will discuss below) much of our analysis can be adapted to more general choices of $(d,p)$. 

Our interest in this note is the phenomenon of `averaging' in the so-called \emph{fast dispersion management regime}.  In particular, for each $\eps>0$, we consider the equation
\begin{equation}\label{DMNLSeps}
\begin{cases}
i\partial_t u + \gamma(\tfrac{t}{\eps})\Delta u = |u|^2 u, \\
u|_{t=0}=\varphi.
\end{cases}
\end{equation}
Our aim to prove that in the limit $\eps\to 0$, the solutions to \eqref{DMNLSeps} converge in a suitable sense to the solution to the equation
\begin{equation}\label{DMNLS0}
\begin{cases}
i\partial_t u + \langle \gamma\rangle\Delta u = |u|^2 u, \\
u|_{t=0}=\varphi,
\end{cases}
\end{equation}
where $\langle\gamma\rangle$ is the average dispersion, defined by
\[
\langle\gamma\rangle=\int_0^1\gamma(t)\,dt.
\]

A related problem (known as \emph{strong dispersion management}) instead considers maps of the form $\tfrac{1}{\eps}\gamma(\tfrac{t}{\eps})$, where one expects convergence to a non-local effective model (see e.g. \cite{CL, Lushnikov, GT1, EHL, HL, PZ, ZGJT}); this problem is related to the study of so-called \emph{dispersion managed solitons}. See also \cite{dBD, DT} for the case of averaging with random dispersion maps.

In the fast dispersion management regime, the averaging problem has been previously studied in works including \cite{AntonelliSautSparber, BK1, BK2, YKT}. In particular, \cite{AntonelliSautSparber} proved that for a piecewise-constant dispersion map $\gamma$ and $\varphi\in H^2$, one can obtain averaging in the $L_t^\infty H_x^2$-norm on any finite subinterval of the maximal lifespan of the solution to \eqref{DMNLS0}.  In their case, the map $\gamma$ itself must be bounded away from zero, although the case $\langle\gamma\rangle=0$ is permitted.  The high regularity on $\varphi$ is imposed in order to utilize the embedding $H^2\hookrightarrow L^\infty$ in dimensions $d\leq 3$.  

Our main goals in this work are twofold: (i) to reduce the regularity required on the initial condition, and (ii) to obtain global-in-time averaging results, when possible.  Our analysis also permits a slightly more general class of dispersion maps than the piecewise-constant case, although $\gamma$ itself must remain bounded away from zero (and the piecewise-constant case is probably the most important example, anyway).  In particular, we will prove that for suitable dispersion maps with nonzero average dispersion and $\varphi \in H^s$ for some $s>0$, we can obtain averaging in $S:=L_t^\infty L_x^2\cap L_{t,x}^4$ on any interval on which the solution to the averaged equation exists and obeys finite $S$-norms.  As the $2d$ cubic equation is $L^2$-critical, we therefore obtain averaging for any scaling-subcritical data.  In fact, under stronger assumptions on $\gamma$ (namely, strict positivity), we can obtain the same averaging result for merely $L^2$ initial data, yielding a critical result.  

For the case of zero average dispersion, we offer no improvement over the result of \cite{AntonelliSautSparber}.  Indeed, our arguments rely heavily on the use of Strichartz estimates (both for the averaged equation and the $\eps$-dependent equations), which break down in this setting.  In general, it seems difficult to treat the case of zero average dispersion without working with bounded solutions. 

To state our main results precisely, we first introduce the notion of an admissible dispersion map, as defined in \cite{MVH}:

\begin{definition}\label{D:admissible} We call $\gamma:\R\to\R$ \emph{admissible} if:
\begin{itemize}
\item $\gamma(t+1)=\gamma(t)$ for all $t\in\R$.
\item $\gamma$ and $\tfrac{1}{\gamma}$ belong to $L_t^\infty(\R)$.
\item $\gamma$ has at most finitely many discontinuities in $[0,1]$.
\end{itemize}
\end{definition}
In particular, given admissible $\gamma$ with non-zero average dispersion, \cite{MVH} established global-in-time Strichartz estimates for the underlying dispersion-managed Schr\"odinger equation.  Our Theorem~\ref{T}(i) below relies fundamentally on these estimates; in particular, this ultimately explains why we can work with (nearly) critical data. 

Next, we recall the main result of \cite{Dodson}, which states that for $\varphi\in L^2$ and $\langle\gamma\rangle>0$, the solution to \eqref{DMNLS0} with $u|_{t=0}\in L^2(\R^2)$ is global in time, scatters in $L^2$, and satisfies $u\in S(\R),$ where
\begin{equation}\label{def:S}
\|u\|_{S(I)}:=\|u\|_{L_t^\infty L_x^2(I\times\R^2)}+\|u\|_{L_{t,x}^4(I\times\R^2)}. 
\end{equation}

Our main result is as follows: 

\begin{theorem}\label{T} Let $\gamma$ be an admissible dispersion map satisfying either 
\begin{itemize}
\item[(i)] $\int_0^1\gamma(t)\,dt>0$, or
\item[(ii)] $\gamma(t)>0$ for all $t\in\R$.
\end{itemize}

In case (i), let $s>0$ and $\varphi \in H^s(\R^2)$.  In case (ii), let $\varphi\in L^2(\R^2)$.

Let $u:\R\times\R^2\to\C$ be the solution to \eqref{DMNLS0} with $u|_{t=0}=\varphi$. For all $\eps>0$ sufficiently small, the solution $u^\eps$ to \eqref{DMNLSeps} with $u^\eps|_{t=0}=\varphi$ is global in time, with $u^\eps\in S(\R)$ and 
\begin{equation}\label{convergence!}
\lim_{\eps\to 0}\|u^\eps-u\|_{S(\R)}= 0. 
\end{equation}
\end{theorem}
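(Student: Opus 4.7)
The plan is to prove Theorem~\ref{T} by a Strichartz-based stability argument, viewing the averaged solution $u$ as an approximate solution of the $\eps$-equation \eqref{DMNLSeps}. Writing $\Gamma_\eps(t)=\int_0^t\gamma(s/\eps)\,ds$ and introducing the propagators $W^\eps(t,s)=e^{i[\Gamma_\eps(t)-\Gamma_\eps(s)]\Delta}$ and $W^0(t,s)=e^{i\langle\gamma\rangle(t-s)\Delta}$, the periodicity and mean-zero property of $\gamma-\langle\gamma\rangle$ yield $|\Gamma_\eps(t)-\Gamma_\eps(s)-\langle\gamma\rangle(t-s)|\lesssim\eps$ uniformly in $t,s$. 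Since $u\in S(\R)$ globally by \cite{Dodson}, I partition $\R$ into finitely many consecutive intervals $I_1,\dots,I_N$ on which $\|u\|_{L_{t,x}^4(I_k)}\leq\eta$, with $\eta$ and $N$ depending only on the (uniform-in-$\eps$) Strichartz constants. On each $I_k$, the standard $L^2$-critical perturbation lemma, applied to the $\eps$-equation with $u$ as the approximate solution, yields
\[
\|u^\eps-u\|_{S(I_k)}\lesssim \|u^\eps(t_{k-1})-u(t_{k-1})\|_{L^2} + \|E_k^\eps\|_{S(I_k)},
\]
where
\[
E_k^\eps(t) = [W^0-W^\eps](t,t_{k-1})\,u(t_{k-1}) \;-\; i\int_{t_{k-1}}^t [W^0-W^\eps](t,s)\,|u|^2u(s)\,ds.
\]
Iterating and using $u^\eps(0)-u(0)=0$ then reduces the theorem (and global existence of $u^\eps$ for small $\eps$) to showing $\|E_k^\eps\|_{S(I_k)}\to 0$ as $\eps\to 0$ for each $k$.

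The key technical ingredient is thus a \emph{free evolution convergence} lemma: for $\psi\in L^2(\R^2)$ and $F\in L_{t,x}^{4/3}(\R\times\R^2)$,
\[
\|[W^\eps-W^0](\cdot,s)\psi\|_{S(\R)} + \Bigl\|\int_s^\cdot [W^\eps-W^0](\cdot,\sigma)F(\sigma)\,d\sigma\Bigr\|_{S(\R)}\xrightarrow{\eps\to 0}0
\]
uniformly in $s$. I would first verify that the Strichartz estimates for $W^\eps$ are uniform in $\eps$: in case (i) this follows from \cite{MVH} together with the scaling invariance of $L_{t,x}^4$, while in case (ii) the bound $|\Gamma_\eps(t)-\Gamma_\eps(s)|\geq(\inf\gamma)\,|t-s|$ gives a uniform dispersive estimate and hence uniform Strichartz directly. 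Given uniform Strichartz, by density and the triangle inequality it suffices to establish the lemma for $\psi$ and $F$ in a dense subclass (e.g.\ Schwartz with compactly supported Fourier transform). For such data, the Fourier multiplier $e^{-i[\Gamma_\eps(t)-\langle\gamma\rangle t]|\xi|^2}-1=O(\eps|\xi|^2)$ on the support of $\hat\psi$ gives $L_t^\infty L_x^2$ convergence; on finite time intervals, Sobolev embedding $H^1\hookrightarrow L^4(\R^2)$ upgrades this to $L_{t,x}^4$; and on the tails, dispersive decay --- uniform in $\eps$ for $|t-s|\gtrsim\eps$ in case (i), and for all $t\neq s$ in case (ii) --- controls both $W^\eps$ and $W^0$ applied to Schwartz data.

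The main obstacle will be case (i), where the dispersive estimate fails uniformly in $\eps$ on the short time scales $|t-s|\lesssim\eps$, so that a direct density argument from $L^2$ alone does not immediately suffice. The $H^s$ hypothesis on $\varphi$ compensates for this through the quantitative bound $\|[W^\eps-W^0]\varphi\|_{L_t^\infty L_x^2(\R)}\lesssim \eps^{s/2}\|\varphi\|_{H^s}$, obtained by splitting the Fourier integral at $|\xi|\sim\eps^{-1/2}$ and using $|e^{i\alpha|\xi|^2}-1|\leq\min(2,|\alpha||\xi|^2)$; by persistence of $H^s$ regularity for the averaged flow, the same rate applies to $u(t_{k-1})$ on each interior $I_k$, and the bootstrap through the $N$ intervals closes. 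In case (ii), the uniform dispersive estimate makes the cutoff analysis unnecessary: the $L^2$-density argument goes through for any $\varphi\in L^2$, yielding the critical averaging result stated in the theorem.
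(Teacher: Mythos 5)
Your proposal is correct in outline, and it splits along the two cases in an interesting way: case (i) is essentially the paper's own proof, while case (ii) takes a genuinely different route. For case (i), you use exactly the paper's scheme: the Duhamel-difference decomposition with $u$ as approximate solution, the bound $|\Gamma_\eps(t,s)-\langle\gamma\rangle(t-s)|\lesssim_\gamma\eps$ (Lemma~\ref{L:crucial-estimate}), the resulting quantitative propagator convergence $\lesssim\eps^{s/2}\|\varphi\|_{H^s}$, uniform-in-$\eps$ Strichartz estimates obtained from \cite{MVH} via the scaling $u(\eps t,\sqrt\eps x)$, interpolation, and an induction over finitely many intervals on which $\|u\|_{L_{t,x}^4}$ is small, with persistence of regularity propagating the $H^s$ control. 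For case (ii), however, the paper does something quite different: since $\gamma$ is bounded away from zero, it inverts $\Gamma_\eps$ and changes variables $t\mapsto c_\eps(t)$ to convert \eqref{DMNLSeps} into the constant-dispersion equation \eqref{NLSeps} with time-dependent coefficient $c_\eps'(t)$, then runs a homogenization argument in the style of \cite{Ntekoume} (frequency truncation at $N$, integration by parts in time to exploit $\|c_\eps(t)-\tfrac{t}{\langle\gamma\rangle}\|_{L_t^\infty}\lesssim\eps$, at the price of boundary terms and $N^2$ losses), applies the stability result Proposition~\ref{L:stability}, and finally undoes the change of variables, which requires a separate argument comparing $u(c_\eps(t))$ with $u(\tfrac{t}{\langle\gamma\rangle})$ in $S(\R)$. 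You instead stay in the original variables and rerun the case (i) perturbative scheme qualitatively: the lower bound $|\Gamma_\eps(t,s)|\geq(\inf\gamma)|t-s|$ gives a dispersive estimate uniform in $\eps$ (this is precisely where $\gamma>0$ enters for you, just as it underlies invertibility of $\Gamma_\eps$ for the paper), hence uniform Strichartz without invoking \cite{MVH}, and then a density argument --- the multiplier bound $O(\eps|\xi|^2)$ plus Bernstein on compact time windows, dispersive decay on the tails --- replaces the paper's integration by parts. Both proofs ultimately rest on the same $O(\eps)$ cancellation; yours buys a unified treatment of (i) and (ii) and avoids the change of variables and the delicate final comparison step, while the paper's buys a homogenization statement (Proposition~\ref{P:w}) of independent interest and the ability to quote the stability theory of \cite{Ntekoume} essentially off the shelf.

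A few points you would need to fill in. First, the perturbation lemma must be stated and proved for the variable-dispersion propagators $W^\eps(t,s)=e^{i\Gamma_\eps(t,s)\Delta}$ with constants uniform in $\eps$; this is routine given uniform Strichartz, but note the error must be formulated at the Duhamel level, exactly as your $E_k^\eps$ does --- the PDE-level error $(\gamma_\eps-\langle\gamma\rangle)\Delta u$ is meaningless for $L^2$ solutions, so your setup is the right one. Second, in case (i) the inhomogeneous propagator-difference term requires the $TT^*$/Christ--Kiselev transference \cite{CK} together with the fractional product rule, i.e.\ control of $\langle\nabla\rangle^{s}F(u)$ in dual Strichartz norms via persistence of regularity; your sketch uses this only implicitly. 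Third, the number of intervals in your partition depends on $\|u\|_{L_{t,x}^4(\R)}$ (Dodson's bound \cite{Dodson}), not only on the Strichartz constants --- harmless, but worth stating, since the smallness threshold of $\eps$ degrades through the finitely many iterations.
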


We have arranged matters so that the underlying averaged equation is the defocusing model, in which case all solutions enjoy global space-time bounds.  More generally, the analysis will show that we can obtain averaging on any interval on which the solution to the averaged equation has finite space-time bounds.  Thus, for example, if $\langle\gamma\rangle<0$, we would obtain global-in-time averaging for masses below a suitable ground state threshold.  We have stated Theorem~\ref{T} in this way to emphasize the fact that our result \emph{allows} for global-in-time averaging, when it is in fact possible. 

Similarly, we have chosen to prove our result only for the $L^2$-critical problem.  This makes it clear that in case (i) we are only slightly off from the critical result, while (ii) obtains the critical result in a restricted setting.  More generally, our analysis should carry over to settings in which one can obtain a reasonable well-posedness and stability theory using traditional Strichartz spaces.  Notably, however, this would exclude the important $1d$ cubic model.

In the rest of the introduction, we briefly give the ideas behind the proof of Theorem~\ref{T}.

In case (i), the proof builds upon the strategy of \cite{AntonelliSautSparber}.  In particular, we run a continuity argument based on examining the difference of the Duhamel formulas for the solutions $u^\eps$ and $u$.  As in \cite{AntonelliSautSparber}, the key is to exploit the fact that the underlying linear propagator for \eqref{DMNLSeps} should converge to that of \eqref{DMNLS0}.  In contrast to \cite{AntonelliSautSparber}, we use a quantitative form of this convergence, which costs us slightly in regularity and ultimately explains why Theorem~\ref{T}(i) is an inherently subcritical result.  With this bound in place, we can then interpolate with the Strichartz estimates of \cite{MVH} (which, by a change of variables, can be shown to hold uniformly in $\eps$).  With the full range of Strichartz estimates at our disposal, we can run a fairly standard continuity argument to establish the desired convergence (globally in time).  In fact, in this setting we obtain a quantitative bound of the form
\[
\|u^\eps-u\|_{S(\R)} \lesssim \eps^c
\]
for some small $c=c(s)>0$, where $H^s$ is the regularity of the initial condition. 

In case (ii), we combine ideas from \cite{Fanelli, GMO}, which studied NLS with time-dependent dispersion, and \cite{Ntekoume}, which studied homogenization for NLS with a spatially inhomogeneous nonlinearity.  In particular, in the case of a strictly positive dispersion map, we can utilize a change of variables as in \cite{Fanelli, GMO} to convert \eqref{DMNLSeps} into an equation with constant dispersion, but with a time-dependent factor in the nonlinearity.  We then follow the approach of \cite{Ntekoume} to obtain homogenization for this equation.  The basic idea is to show that the solution to the averaged equation defines an approximate solution to the $\eps$-dependent equation.  That this is possible relies on the fact that the solution itself obeys good space-time bounds, while the difference in the nonlinear factors should vanish as $\eps\to 0$.  To witness this vanishing, however, we first need to perform an integration by parts in time, which introduces a few minor technical complications.  In the end, however, we can obtain the the desired homogenization result.  We then `undo' the change of variables in order to reach the desired averaging result in Theorem~\ref{T}(ii).

The rest of this paper is organized as follows: Section~\ref{S:preliminary} collects some preliminary lemmas.  We then prove Theorem~\ref{T}(i) in Section~\ref{S:i} and Theorem~\ref{T}(ii) in Section~\ref{S:ii}.

\subsection*{Acknowledgements} L. C. was financed by grant $\#$2020/10185-1, S\~ao Paulo Research Foundation (FAPESP). J.M. was supported by a Simons Collaboration Grant.  We are grateful to R. Killip for suggesting the reference \cite{Ntekoume}.

\section{Preliminaries}\label{S:preliminary}  We write $A\lesssim B$ to denote $A\leq CB$ for some $C>0$.  We denote dependence of implicit constants on parameters via subscripts.  For example, $A\lesssim_\gamma B$ means $A\leq CB$ for some $C=C(\gamma)$.

We write $\langle\nabla\rangle^s$ to denote the Fourier multiplier operator $(1-\Delta)^{s/2}$.  We employ the standard Littlewood--Paley frequency projectors, denoted $P_{\leq N}$, $P_N$, $P_{>N}$, and so on.  We will also make use of the standard Bernstein inequalities, e.g.
\begin{align*}
\|P_{>N} f\|_{L^p} &\lesssim N^{-s} \||\nabla|^s f\|_{L^p}, \\
\| |\nabla|^s P_{\leq N}f\|_{L^p} &\lesssim N^s \|f\|_{L^p}, \\
\| P_{\leq N} f\|_{L^q} &\lesssim N^{\frac{d}{r}-\frac{d}{q}}\|P_{\leq N} f\|_{L^r}\quad(r\leq q),
\end{align*}
and so on. 

We will use the notation $\text{\O}(X)$ to denote a finite linear combination of terms that resemble $X$ up to frequency projection and/or complex conjugation.  In particular, we will write
\[
|u|^2 u = |u_{\leq N}|^2 u_{\leq N} + \text{\O}(u_{>N} u^2). 
\]

We will need to make use of the Strichartz estimates for the dispersion-managed equation established in \cite{MVH}.  To state these precisely in two dimensions, recall that we call $(q,r)$ an \emph{admissible pair} if $2<q,r\leq\infty$ and $\tfrac{2}{q}+\tfrac{2}{r}=1$. (In higher dimensions, the endpoint $q=2$ is allowed.) Theorem~2 in \cite{MVH} showed that for admissible dispersion maps $\gamma$ (in the sense of Definition~\ref{D:admissible}) with $\langle \gamma\rangle\neq 0$, we have 
\begin{equation}\label{MVH-Strichartz}
\bigl\| e^{i\Gamma(\cdot,s)\Delta}\|_{L^2\to L_t^q L_x^r} \lesssim_\gamma 1
\end{equation}
uniformly in $s\in\R$, where $\Gamma(t,s):=\int_s^t\gamma(\tau)\,d\tau$.  Combining this with the method of $TT^*$ and the Christ--Kiselev lemma, this yields the full range of Strichartz estimates (minus the $L_t^2$ endpoint, in the higher-dimensional case). 

For $\eps>0$, we now define 
\begin{equation}\label{gamma-eps}
\gamma_\eps(t)=\gamma(\tfrac{t}{\eps}) \qtq{and} \Gamma_{\eps}(t,t_0)=\int_{t_0}^t \gamma_\eps(\tau)\,d\tau.
\end{equation}

We will see below that \eqref{MVH-Strichartz} holds for $\Gamma_\eps$ as well, with implicit constant independent of $\eps$ (cf. \eqref{uniform-Strichartz}).

The following elementary estimate plays a key role in much of what follows.  Indeed, it explains exactly why one should expect convergence to the averaged equation. 

\begin{lemma}\label{L:crucial-estimate} Let $\gamma$ be an admissible dispersion map, with $\langle\gamma\rangle=\int_0^1\gamma(t)\,dt$. For any $t,t_0\in\R$,
\begin{equation}\label{crucial-estimate}
\bigl| \Gamma_\eps(t,t_0)-\langle\gamma\rangle(t-t_0)\bigr| \leq 4\eps\bigl\{\|\gamma\|_{L^\infty}+|\langle\gamma\rangle|\bigr\}.
\end{equation}
\end{lemma}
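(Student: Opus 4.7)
The plan is to reduce the estimate to a purely elementary statement about periodic functions by rescaling time and then exploiting the periodicity of $\gamma$. First I would change variables $\sigma = \tau/\eps$ in the integral defining $\Gamma_\eps(t,t_0)$ to obtain
\[
\Gamma_\eps(t,t_0) = \eps\int_{t_0/\eps}^{t/\eps}\gamma(\sigma)\,d\sigma,
\]
so that, setting $a=t_0/\eps$ and $b=t/\eps$, the claim reduces (after dividing by $\eps$) to bounding $\bigl|\int_a^b\gamma(\sigma)\,d\sigma - \langle\gamma\rangle(b-a)\bigr|$ by an absolute constant times $\|\gamma\|_{L^\infty}+|\langle\gamma\rangle|$, for arbitrary $a,b\in\R$.

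Next, I would invoke the defining property of $\langle\gamma\rangle$ together with $1$-periodicity: for any $c\in\R$ one has $\int_c^{c+1}\gamma(\sigma)\,d\sigma=\langle\gamma\rangle$. Assuming without loss of generality $b\geq a$, I would write $b-a = n+r$ with $n=\lfloor b-a\rfloor\in\Z_{\geq 0}$ and $r\in[0,1)$, and split
\[
\int_a^b\gamma(\sigma)\,d\sigma = \int_a^{a+n}\gamma(\sigma)\,d\sigma + \int_{a+n}^b\gamma(\sigma)\,d\sigma = n\langle\gamma\rangle + \int_{a+n}^b\gamma(\sigma)\,d\sigma,
\]
where the second equality uses that the integral of $\gamma$ over any unit interval equals $\langle\gamma\rangle$. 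Subtracting $\langle\gamma\rangle(b-a) = n\langle\gamma\rangle + r\langle\gamma\rangle$ leaves only
\[
\int_{a+n}^b\gamma(\sigma)\,d\sigma - r\langle\gamma\rangle,
\]
which has absolute value at most $r\|\gamma\|_{L^\infty} + r|\langle\gamma\rangle| \leq \|\gamma\|_{L^\infty}+|\langle\gamma\rangle|$ since $r<1$. Multiplying by $\eps$ yields the claim (with a constant even better than $4$; the factor $4$ in the statement is evidently room to spare, which is convenient if one prefers an alternative presentation that writes $G(x):=\int_0^x\gamma - \langle\gamma\rangle x$ as a $1$-periodic function and estimates $|G(b)-G(a)|\leq 2\|G\|_{L^\infty}$).

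There is essentially no obstacle here; the content of the lemma is the simple observation that a $1$-periodic integrand averages out to its mean over long intervals, with the residual error controlled by a single unit cell. The only minor care is the case $b<a$, which is handled by symmetry (or by noting that the estimate is invariant under swapping $t$ and $t_0$). The rescaling by $\eps$ then directly produces the advertised $O(\eps)$ decay.
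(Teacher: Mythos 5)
Your proof is correct and follows essentially the same route as the paper: change variables to rescale out $\eps$, use $1$-periodicity to cancel an integer number of whole periods against $\langle\gamma\rangle$ times their length, and bound the leftover fractional piece by $\|\gamma\|_{L^\infty}+|\langle\gamma\rangle|$. The only (cosmetic) difference is that you extract the whole periods starting at $a$ itself, via $\int_c^{c+1}\gamma=\langle\gamma\rangle$ for arbitrary $c$, whereas the paper chops at the integer points $\lceil t_0/\eps\rceil$ and $\lfloor t/\eps\rfloor$, leaving two boundary pieces instead of one; your variant also sidesteps the degenerate case $\lceil t_0/\eps\rceil>\lfloor t/\eps\rfloor$ and yields the sharper constant $1$ in place of $4$.
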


\begin{proof} We change variables to write
\begin{align*}
\Gamma_\eps(t,t_0) & = \eps\int_{\frac{t_0}{\eps}}^{\frac{t}{\eps}}\gamma(\tau)\,d\tau \\
&= \eps\biggl[\int_{\frac{t_0}{\eps}}^{\lceil \frac{t_0}{\eps}\rceil} \gamma(\tau)\,d\tau + \int_{\lfloor \frac{t}{\eps}\rfloor}^{\frac{t}{\eps}} \gamma(\tau)\,d\tau\biggr] + \eps\int_{\lceil \frac{t_0}{\eps}\rceil}^{\lfloor\frac{t}{\eps}\rfloor}\gamma(\tau)\,d\tau, 
\end{align*}
and we similarly decompose
\[
\langle\gamma\rangle(t-t_0)=\eps\langle\gamma\rangle\bigl[ \tfrac{t}{\eps}-\lfloor\tfrac{t}{\eps}\rfloor - (\tfrac{t_0}{\eps} - \lceil\tfrac{t_0}{\eps}\rceil)\bigr] +\eps\langle\gamma\rangle\bigl(\lfloor \tfrac{t}{\eps}\rfloor - \lceil\tfrac{t_0}{\eps}\rceil\bigr). 
\]

Using the fact that 
\[
\int_{\lceil \frac{t_0}{\eps}\rceil}^{\lfloor\frac{t}{\eps}\rfloor}\gamma(\tau)\,d\tau = \langle\gamma\rangle\bigl(\lfloor \tfrac{t}{\eps}\rfloor - \lceil\tfrac{t_0}{\eps}\rceil\bigr),
\]
we obtain \eqref{crucial-estimate}.
\end{proof}

Finally, we record a stability result for the class of equations of the form
\begin{equation}\label{E:NLSmu}
    i\partial_t u + \Delta u = \mu(t)|u|^2 u,\quad \mu\in L_t^\infty(\R).
\end{equation}

A similar result was proven in \cite{Ntekoume}, with a spatially-dependent coefficient rather than a time-dependent coefficient.  As the proof is essentially identical, we will omit it and instead refer the reader to \cite[Theorem~3.4]{Ntekoume}. 


\begin{proposition}\label{L:stability} Let $\mu \in L^\infty(\R)$ and $I=[0,T]$.  Suppose $\tilde u$ satisfies
\begin{equation}
\begin{cases} 
i\partial_t\tilde{u} + \Delta \tilde{u} = \mu(t)|\tilde u|^2\tilde u + e \\
\tilde u|_{t=0}=\tilde u_0\in L^2.
\end{cases}
\end{equation}

Assume that
\[
\|\tilde{u}\|_{L_t^\infty L_x^2(I \times \R^2)} \leq M,\quad\|u_0 - \tilde{u}_0\|_{L_x^2} \leq M',\qtq{and} 
\|\tilde{u}\|_{L_{t,x}^4(I \times \R^2)} \leq L
\]
for some $M, M', L >0$ and $u_0 \in L^2(\R^2)$. There exists $\eta_1=\eta_1(M,M',L,\|\mu\|_{L^\infty})$ sufficiently small so that if 
\begin{align}
\|e^{it\Delta}(u_0 - \widetilde{u}_0)\|_{L_{t,x}^4(I \times \R^2)} \leq \eta \qtq{and}
\left\| \int_0^t e^{i(t-s)\Delta}e(s) \,ds \right\|_{L_{t,x}^4} \leq \eta,
\end{align}
then there exists a unique solution $u$ to \eqref{E:NLSmu} with $u|_{t=0} = u_0$ obeying 
\begin{align}
\|u - \widetilde{u}\|_{L_{t,x}^4} \lesssim \eta,\quad\|u - \widetilde{u}\|_{S(I)} \lesssim M',\qtq{and}
    \|u\|_{S(I)} &\lesssim 1.
\end{align}
\end{proposition}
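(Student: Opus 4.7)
The plan is to adapt the now-standard stability theory for the mass-critical cubic NLS on $\R^2$ to the present setting, treating the bounded measurable coefficient $\mu(t)$ perturbatively. Since $\mu \in L_t^\infty$, it enters only through a multiplicative factor $\|\mu\|_{L^\infty}$ in nonlinear estimates and plays no role in the Strichartz theory, which is simply that of the free propagator $e^{it\Delta}$ on $\R^2$ (admissible pairs $(\infty,2)$ and $(4,4)$, with dual pair $(4/3,4/3)$ for Duhamel terms).

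\emph{Partition and difference equation.} First I would partition $I = [0,T]$ into $J$ consecutive subintervals $I_j = [t_j, t_{j+1}]$ such that $\|\tilde u\|_{L_{t,x}^4(I_j \times \R^2)} \leq \eta_0$, where $\eta_0 = \eta_0(\|\mu\|_{L^\infty})$ will be chosen small enough to absorb certain nonlinear contributions; this forces $J = J(L, \|\mu\|_{L^\infty})$. Setting $w = u - \tilde u$, one finds
\[
i\partial_t w + \Delta w = \mu(t)\bigl(|u|^2 u - |\tilde u|^2 \tilde u\bigr) - e, \qquad w|_{t=0} = u_0 - \tilde u_0.
\]
Combining the pointwise bound $\bigl||u|^2 u - |\tilde u|^2 \tilde u\bigr| \lesssim (|\tilde u|^2 + |w|^2)|w|$ with Strichartz and the hypotheses on $e^{it\Delta}(u_0-\tilde u_0)$ and the Duhamel integral of $e$ yields, on each $I_j$,
\[
\|w\|_{S(I_j)} \lesssim \|w(t_j)\|_{L_x^2} + \|\mu\|_{L^\infty}\bigl(\eta_0^2 + \|w\|_{L_{t,x}^4(I_j)}^2\bigr)\|w\|_{L_{t,x}^4(I_j)} + \eta.
\]

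\emph{Bootstrap and iteration.} Choosing $\eta_0$ small depending on $\|\mu\|_{L^\infty}$, the $\eta_0^2$ term is absorbed into the left-hand side. A continuity argument on each $I_j$, initialized by the smallness of $\|e^{i(t-t_j)\Delta}w(t_j)\|_{L_{t,x}^4(I_j)}$, justifies absorbing the cubic term as well, producing
\[
\|w\|_{S(I_j)} \leq C\bigl(\|w(t_j)\|_{L_x^2} + \|e^{i(t-t_j)\Delta}w(t_j)\|_{L_{t,x}^4(I_j)} + \eta\bigr).
\]
Inserting the Duhamel formula for $w(t_{j+1})$ in terms of $w(t_j)$, one obtains a recursion of the shape $A_{j+1} \leq C(A_j + \eta)$ for a composite quantity $A_j$ that simultaneously controls $\|w\|_{S([0,t_j])}$ and the forward $L_{t,x}^4$ norm of $e^{i(t-t_j)\Delta}w(t_j)$ on $[t_j,T]$. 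Iterating $J$ times yields $A_J \lesssim C^J\bigl(\|u_0 - \tilde u_0\|_{L_x^2} + J\eta\bigr)$, which upon summing the subinterval bounds produces the three claimed estimates.

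\emph{Main obstacle.} The principal technical point is the exponential accumulation $C^J$ over the $J = J(L, \|\mu\|_{L^\infty})$ subintervals: the threshold $\eta_1$ must be taken small enough that the hypothesis $\eta \leq \eta_1$ both defeats this geometric growth and preserves the smallness of $\|w\|_{L_{t,x}^4(I_j)}$ required to keep the continuity argument closed on each subinterval. This is precisely the source of the dependence $\eta_1 = \eta_1(M, M', L, \|\mu\|_{L^\infty})$ in the statement; the remainder of the proof is essentially bookkeeping, and indeed parallels \cite[Theorem~3.4]{Ntekoume} line by line once one tracks the $\|\mu\|_{L^\infty}$ factors through the nonlinear estimates.
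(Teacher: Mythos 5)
Your route is the paper's route: the paper itself contains no proof of Proposition~\ref{L:stability}, deferring to \cite[Theorem~3.4]{Ntekoume} with the remark that passing from a spatially-dependent to a time-dependent bounded coefficient changes nothing, and your outline --- partition $I$ into $J=J(L,\|\mu\|_{L^\infty})$ subintervals with $\|\tilde u\|_{L_{t,x}^4(I_j\times\R^2)}\leq\eta_0$, free-propagator Strichartz plus a continuity argument on each piece, then iteration with the exponential loss $C^J$ beaten by the choice of $\eta_1$ --- is exactly that standard mass-critical stability argument, with $\mu$ correctly identified as contributing only a factor of $\|\mu\|_{L^\infty}$ through the dual $L_{t,x}^{4/3}$ estimate.

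There is, however, a genuine flaw in your final bookkeeping: the single composite quantity $A_j$ conflates two scales that must be kept separate. Since $A_j$ is supposed to control $\|w\|_{S([0,t_j])}\geq \|w\|_{L_t^\infty L_x^2}\geq \|u_0-\tilde u_0\|_{L_x^2}$, your iterated bound $A_J\lesssim C^J(\|u_0-\tilde u_0\|_{L_x^2}+J\eta)$ necessarily carries the $M'$-sized term, and in the relevant regime $\eta\ll M'$ it yields only $\|u-\tilde u\|_{L_{t,x}^4}\lesssim C^J M'$ --- not the claimed $\|u-\tilde u\|_{L_{t,x}^4}\lesssim\eta$, which is precisely the estimate the paper exploits in Section~\ref{S:ii}, where $\eta\to 0$ while the masses stay of size one. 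The standard repair (and the one used in \cite{Ntekoume}) is to run the recursion only on $\eta$-scale quantities, say $B_j:=\|e^{i(t-t_j)\Delta}w(t_j)\|_{L_{t,x}^4([t_j,T]\times\R^2)}$: using the Duhamel representation of $w(t_j)$ (which folds in the past contributions of the nonlinearity and of $e$), one gets $B_{j+1}\leq C\bigl(B_j+\eta_0^2\delta_j+\delta_j^3+\eta\bigr)$ with $\delta_j:=\|w\|_{L_{t,x}^4(I_j\times\R^2)}\lesssim B_j+\eta$; the point is that the free evolution of $w(t_j)$ is measured directly in $L_{t,x}^4$, so $\|w(t_j)\|_{L_x^2}$ never enters this chain. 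This gives $\delta_j\lesssim C^j\eta$ once $\eta_1$ is chosen so that $C^J J\eta_1$ is small (which also keeps the cubic terms absorbable), and only afterwards does one read off $\|w\|_{S(I_j)}\lesssim \|w(t_j)\|_{L_x^2}+O(\eta)\lesssim M'$ interval by interval. One further caveat, inherited from the statement as printed rather than from your plan: the hypotheses control the error's Duhamel term only in $L_{t,x}^4$, which cannot by itself furnish the $L_t^\infty L_x^2$ component of the claimed bounds on $u-\tilde u$; one needs either $S$-norm smallness of $\int_0^t e^{i(t-s)\Delta}e(s)\,ds$ (which the integration-by-parts argument of Section~\ref{S:ii} would supply with no essential change) or a bound on $\|e\|_{L_{t,x}^{4/3}}$.
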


This result will be used in Section~\ref{S:ii}, where the coefficients $\mu$ will depend on the parameter $\eps$.  Importantly, however, the $L^\infty$-norm of these coefficients will \emph{not} depend on $\eps$, and hence the parameter $\eta_1$ appearing in the statement of Proposition~\ref{L:stability} will be uniform in $\eps$. 

\section{Proof of Theorem~\ref{T}(i)}\label{S:i}

Throughout this section, we fix an admissible dispersion map $\gamma$ satisfying
\[
\langle\gamma\rangle = \int_0^1 \gamma(t)\,dt>0. 
\]

Given $\eps>0$, we define $\gamma_\eps$ and $\Gamma_\eps$ as in \eqref{gamma-eps}.

\begin{proposition}[Convergence of propagators]\label{P:CoP} There exists $C_\gamma>0$ such that for any $t_0\in\R$ and any $\theta\in[0,1]$, 
\begin{equation}\label{E:CoP}
\|e^{i\Gamma_\eps(\cdot,t_0)\Delta}-e^{i\langle\gamma\rangle(\cdot-t_0)\Delta}\|_{\dot H_x^\theta\to L_t^\infty L_x^2} \leq C_\gamma\eps^{\frac{\theta}{2}}.
\end{equation}
\end{proposition}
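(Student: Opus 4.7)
The plan is to work entirely on the Fourier side and exploit Lemma~\ref{L:crucial-estimate} to bound the phase difference between the two propagators uniformly in $t$ and $t_0$. Since $e^{i\alpha\Delta}$ is Fourier multiplication by $e^{-i\alpha|\xi|^2}$, Plancherel reduces matters to a pointwise estimate on the symbol
\[
m_\eps(t,\xi):=e^{-i\Gamma_\eps(t,t_0)|\xi|^2}-e^{-i\langle\gamma\rangle(t-t_0)|\xi|^2}.
\]
The elementary inequality $|e^{ix}-e^{iy}|\leq\min(2,|x-y|)$ together with Lemma~\ref{L:crucial-estimate} immediately yields
\[
|m_\eps(t,\xi)| \leq \min\bigl(2,\,C_\gamma\eps|\xi|^2\bigr)
\]
uniformly in $t,t_0\in\R$, where $C_\gamma$ depends only on $\|\gamma\|_{L^\infty}$ and $|\langle\gamma\rangle|$.

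The second step is to promote this pointwise bound to a mapping estimate from $\dot H^\theta_x$ to $L^2_x$. I would split the frequency space at the scale $|\xi|=\eps^{-1/2}$, which is precisely where the two bounds in the minimum above balance: on $\{|\xi|\leq\eps^{-1/2}\}$ I use the quantitative bound $C_\gamma\eps|\xi|^2$, while on $\{|\xi|>\eps^{-1/2}\}$ I use the trivial bound $2$. A short computation then gives, for every $\theta\in[0,1]$,
\[
|m_\eps(t,\xi)|^2 \lesssim_\gamma \eps^{\theta}|\xi|^{2\theta}
\]
uniformly in $t$; on the low-frequency piece this is $\eps^{2-\theta}|\xi|^{4-2\theta}\leq 1$, and on the high-frequency piece it is $\eps^{-\theta}|\xi|^{-2\theta}\leq 1$. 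Integrating against $|\wh f(\xi)|^2$ and invoking Plancherel once more gives the bound $\eps^{\theta/2}\|\,|\nabla|^\theta f\|_{L^2_x}$, which is uniform in $t$ and hence yields \eqref{E:CoP} (in fact with the homogeneous norm, which is stronger than the inhomogeneous one stated). An equivalent packaging is to verify only the endpoints $\theta=0$ (trivial, by unitarity of the propagators) and $\theta=1$ (by the split argument, or equivalently by writing $m_\eps$ as an integral of its phase derivative) and then appeal to complex interpolation.

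I do not anticipate a serious obstacle. The entire $(t,t_0)$-dependence is absorbed by Lemma~\ref{L:crucial-estimate}, whose bound is already uniform in both variables, and the cutoff $|\xi|\sim\eps^{-1/2}$ is dictated by optimizing between the two elementary phase estimates. No Strichartz input is needed for this proposition itself; the Strichartz machinery of \cite{MVH} will enter only in the continuity argument of Section~\ref{S:i} that subsequently uses \eqref{E:CoP} to control the Duhamel difference $u^\eps-u$.
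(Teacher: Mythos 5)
Your proof is correct and is essentially the paper's argument: the paper likewise factors out a unitary propagator, reduces via Plancherel to a pointwise symbol bound, invokes Lemma~\ref{L:crucial-estimate} for the phase difference, and then uses the elementary interpolated estimate $|e^{ix}-1|\leq\min(2,|x|)\lesssim_\theta |x|^\theta$ in one line, whereas you implement that same interpolation through an explicit frequency split at $|\xi|\sim\eps^{-1/2}$ --- an equivalent packaging. One cosmetic slip: the proposition is already stated with the homogeneous norm $\dot H^\theta_x$, so your closing parenthetical about strengthening an inhomogeneous statement is vacuous, though harmless.
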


\begin{proof} This estimate is essentially contained in \cite{AntonelliSautSparber}.  We fix $t_0,t\in\R$, $\theta\in[0,1]$, and and $\varphi \in \dot H^\theta$. Using \eqref{crucial-estimate}, we estimate
\begin{align*}
\|& (e^{i\Gamma_\eps(t,t_0)\Delta}-e^{i\langle\gamma\rangle(t-t_0)\Delta})\varphi\|_{L^2}^2  \\
& = \int \bigl| (e^{-i[\Gamma_\eps(t,t_0)-\langle\gamma\rangle(t-t_0)]|\xi|^2}-1)\hat\varphi(\xi)\bigr|^2\,d\xi \\
& \leq \int |\Gamma_\eps(t,t_0)-\langle \gamma\rangle(t-t_0)|^{\theta} \ \bigl| |\xi|^{\theta}\hat \varphi(\xi)|^2\,d\xi  \lesssim_\gamma \eps^{\theta}\|\varphi\|_{\dot H^\theta}^2.
\end{align*}
\end{proof}

\begin{corollary}\label{C:CoP} Let $\eps>0$ and $\theta\in[0,1]$.
\begin{itemize}
\item[(i)] For any Schr\"odinger admissible pair $(q,r)$ and any $t_0\in\R$, 
\begin{equation}\label{interp-1}
\| e^{i\Gamma_\eps(\cdot,t_0)\Delta}-e^{i\langle\gamma\rangle(\cdot-t_0)\Delta}\|_{H_x^{\theta}\to L_t^q L_x^r} \lesssim_\gamma \eps^{(1-\frac{2}{q})\frac\theta2}. 
\end{equation}
\item[(ii)] For any admissible pairs $(q,r)$, $(\tilde q,\tilde r)$ we have
\begin{equation}\label{interp-2}
\biggl\| \int_{t_0}^t [e^{i\Gamma_\eps(t,s)\Delta}-e^{i\langle\gamma\rangle(t-s)\Delta}]F(s)\,ds \biggr\|_{L_t^q L_x^r} \lesssim_\gamma \eps^{(2-\frac{2}{q}-\frac{2}{\tilde q})\frac\theta2} \|\langle\nabla\rangle^{2\theta}F\|_{L_t^{\tilde q'}L_x^{\tilde r'}}. 
\end{equation}
\end{itemize}
\end{corollary}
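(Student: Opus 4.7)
The plan is to combine Proposition~\ref{P:CoP} with uniform-in-$\eps$ Strichartz estimates, via complex interpolation and a $TT^*$-type duality argument. As a preparatory step, we observe that the Strichartz bound \eqref{MVH-Strichartz} from \cite{MVH} transfers to $\Gamma_\eps$ with a constant independent of $\eps$: the identity $\Gamma_\eps(t, t_0) = \eps\,\Gamma(t/\eps, t_0/\eps)$ together with the change of variables $\tilde t = t/\eps$, $y = \sqrt{\eps}\,x$, reduces $e^{i\Gamma_\eps(\cdot, t_0)\Delta}$ to a rescaled form of $e^{i\Gamma(\cdot, s)\Delta}$, and the admissibility relation $\tfrac{2}{q} + \tfrac{2}{r} = 1$ in dimension two forces the resulting powers of $\eps$ to cancel. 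This yields $\|e^{i\Gamma_\eps(\cdot, t_0)\Delta}f\|_{L_t^q L_x^r} \lesssim_\gamma \|f\|_{L^2}$ uniformly in $\eps$ and $t_0$ (and similarly for $e^{i\langle\gamma\rangle(\cdot - t_0)\Delta}$), whence $T_\eps := e^{i\Gamma_\eps(\cdot, t_0)\Delta} - e^{i\langle\gamma\rangle(\cdot - t_0)\Delta}$ satisfies $T_\eps : L^2 \to L_t^q L_x^r$ with norm $\lesssim_\gamma 1$.

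For part (i), complex (Stein) interpolation between this uniform Strichartz bound and Proposition~\ref{P:CoP} yields, for each $\alpha \in [0, 1]$ and each admissible $(q_0, r_0)$,
\[
T_\eps : \dot H^{(1-\alpha)\theta} \to L_t^Q L_x^R \quad \text{with norm} \quad \lesssim_\gamma \eps^{(1-\alpha)\theta/2},
\]
where $(Q, R)$ is the admissible pair interpolating $(\infty, 2)$ with $(q_0, r_0)$ at parameter $\alpha$. Given a target admissible pair $(q, r)$, the choice $Q = q$ and $\alpha = q_0/q$, with $q_0$ approaching the non-admissible 2D endpoint $q_0 = 2$ in a limiting sense, produces the exponent $(1-2/q)\theta/2$; the embedding $H^\theta \hookrightarrow \dot H^{(1-\alpha)\theta}$ then yields \eqref{interp-1}.

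For part (ii), setting $A(t) := e^{i\Gamma_\eps(t, t_0)\Delta}$ and $B(t) := e^{i\langle\gamma\rangle(t - t_0)\Delta}$, we have $e^{i\Gamma_\eps(t, s)\Delta} = A(t)A(s)^*$ and $e^{i\langle\gamma\rangle(t-s)\Delta} = B(t)B(s)^*$, whence the $TT^*$-type decomposition
\[
A(t)A(s)^* - B(t)B(s)^* = T_\eps(t) T_\eps(s)^* + T_\eps(t) B(s)^* + B(t) T_\eps(s)^*.
\]
The principal contribution $T_\eps(t)T_\eps(s)^*$ produces the claimed bound: applying part (i) to the $t$-factor with exponent $q$, the dual of (i) to the $s$-factor with exponent $\tilde q$, and moving a total of $2\theta$ spatial derivatives onto $F$---which is permissible because the multipliers $\langle\nabla\rangle^s$ commute with every $A(t)$ and $B(t)$---produces the factor $\eps^{(1-2/q)\theta/2}\eps^{(1-2/\tilde q)\theta/2}\|\langle\nabla\rangle^{2\theta}F\|_{L_t^{\tilde q'}L_x^{\tilde r'}}$. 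The two cross terms are treated by combining part (i) once with dual Strichartz on the other factor and absorbed into the same form using $\|\langle\nabla\rangle^\theta F\| \leq \|\langle\nabla\rangle^{2\theta}F\|$. Finally, the Christ--Kiselev lemma converts the un-retarded estimate to the retarded form appearing in \eqref{interp-2}.

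The main technical difficulty is the careful bilinear bookkeeping in the $TT^*$ decomposition for (ii): one must track derivative counts and $\eps$-exponents across the three terms, confirming that each contribution respects the product structure in \eqref{interp-2}. A secondary, more benign issue is the non-admissibility of $q = 2$ in 2D Strichartz, which forces the sharp exponent $(1-2/q)$ in (i) to be reached through a limiting argument from admissible $q_0 > 2$.
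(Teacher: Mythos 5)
Your overall architecture coincides with the paper's (very terse) proof: the rescaling $v(t,x)=u(\eps t,\sqrt{\eps}x)$ to transfer \eqref{MVH-Strichartz} to $\Gamma_\eps$ uniformly in $\eps$, interpolation against Proposition~\ref{P:CoP} for part (i), and $TT^*$ plus Christ--Kiselev for part (ii). Your scaling step and your three-term algebra $A(t)A(s)^*-B(t)B(s)^*=T_\eps(t)T_\eps(s)^*+T_\eps(t)B(s)^*+B(t)T_\eps(s)^*$ are correct, and your bookkeeping for the principal term $T_\eps(t)T_\eps(s)^*$ does reproduce the stated exponent. The genuine gap is in the cross terms. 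A term containing a single factor of $T_\eps$ carries only one factor of smallness: estimating $T_\eps(t)B(s)^*$ by part (i) on the $t$-side and dual Strichartz on the $s$-side yields the power $\eps^{(1-\frac{2}{q})\frac{\theta}{2}}$, and since every admissible $\tilde q$ satisfies $\tilde q>2$, we have $1-\frac{2}{q}<2-\frac{2}{q}-\frac{2}{\tilde q}$, so this is a strictly \emph{smaller} power of $\eps$ than the one claimed in \eqref{interp-2}. Your proposed absorption via $\|\langle\nabla\rangle^{\theta}F\|\leq\|\langle\nabla\rangle^{2\theta}F\|$ repairs the derivative count but cannot manufacture the missing factor $\eps^{(1-\frac{2}{\tilde q})\frac{\theta}{2}}$: the free propagator $B$ carries no $\eps$-dependence, so no rearrangement of Strichartz/dual-Strichartz bounds on a one-$T_\eps$ term produces two gains. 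What your decomposition honestly proves is \eqref{interp-2} with exponent $\min\{1-\frac{2}{q},\,1-\frac{2}{\tilde q}\}\frac{\theta}{2}$ (the simpler two-term decomposition $T_\eps(t)A(s)^*+B(t)T_\eps(s)^*$ gives the same). That weaker exponent is still positive and is all that the proof of Theorem~\ref{T}(i) consumes (only some $c=c(s)>0$ enters there), but it does not prove the corollary as stated.

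A secondary problem is your limiting argument in part (i). For each fixed admissible $q_0>2$, interpolation gives the exponent $(1-\frac{q_0}{q})\frac{\theta}{2}$ with a constant depending on $q_0$, and since the endpoint pair $(2,\infty)$ genuinely fails in two dimensions, these constants need not stay bounded as $q_0\downarrow 2$; letting $q_0\to 2$ therefore yields every exponent strictly below $(1-\frac{2}{q})\frac{\theta}{2}$ but not the stated one. There is a clean repair that avoids the endpoint entirely: the proof of Proposition~\ref{P:CoP} rests only on $|e^{ix}-1|\leq 2^{1-a}|x|^{a}$ for $a\in[0,1]$, hence the bound $\|T_\eps\|_{\dot H^\sigma_x\to L_t^\infty L_x^2}\lesssim_\gamma \eps^{\sigma/2}$ is valid for all $\sigma\in[0,2]$. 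Interpolating the $\sigma=2$ estimate against \eqref{uniform-Strichartz} at the admissible pair with $q_1=q(1-\frac{\theta}{2})+\theta\in(2,q]$ lands exactly on $(q,r)$, uses data in $\dot H^{(1-\frac{2}{q})\theta}\supset H^\theta$, and gives precisely the power $\eps^{(1-\frac{2}{q})\frac{\theta}{2}}$. (The paper's one-line proof elides this point as well; note, however, that the analogous repair does not rescue the full product exponent in (ii) when $q\neq\tilde q$, for the cross-term reason above.)
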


\begin{proof} Fix an admissible pair $(q,r)$, $t_0\in\R$, and $\eps>0$, and define 
\[
u(t,x)=e^{i\Gamma_{\eps}(t,t_0)\Delta}\varphi. 
\]
It follows that
\[
v(t,x):=u(\eps t,\sqrt{\eps}x) \qtq{solves}\begin{cases} i\partial_t v + \gamma(t)\Delta v = 0, \\ v(\tfrac{t_0}{\eps},x) = \varphi(\sqrt{\eps}x), \end{cases} 
\]
i.e.
\[
v(t,x) = e^{i\Gamma_1(t,\eps^{-1}t_0)\Delta}[\varphi(\sqrt{\eps}x )]. 
\]

Thus, by \cite[Theorem~2]{MVH}, we have
\begin{equation}\label{str-with-eps}
\|u(\eps t,\sqrt{\eps} x)\|_{L_t^q L_x^r} \lesssim_\gamma \|\varphi(\sqrt{\eps}x)\|_{L^2}, 
\end{equation}
where the implicit constant is independent of $\eps$. As $(q,r)$ is an admissible pair, a change of variables reduces the estimate \eqref{str-with-eps} to 
\[
\|u(t,x)\|_{L_t^q L_x^r} \lesssim_\gamma \|\varphi\|_{L^2}.
\]

It follows that 
\begin{equation}\label{uniform-Strichartz}
\|e^{i\Gamma_\eps(t,t_0)\Delta}\|_{L^2\to L_t^q L_x^r}\lesssim_\gamma 1 \qtq{uniformly in}\eps>0.
\end{equation}

As we have the same bounds for $e^{i\langle\gamma\rangle(\cdot-t_0)\Delta}$, we can now obtain \eqref{interp-1} by the triangle inequality and interpolation with \eqref{E:CoP}. 

We now deduce (ii) from (i) using the method of ${TT}^*$ and the Christ--Kiselev lemma \cite{CK}.
\end{proof}

\begin{remark} We can obtain these estimates in other dimensions, as well, as long as we avoid the $L_t^2$ endpoint in (ii).
\end{remark}

We are now in a position to prove Theorem~\ref{T}(i).

\begin{proof}[Proof of Theorem~\ref{T}(i)] We let $u$ be the solution to \eqref{DMNLS0} with $u|_{t=0}=\varphi$ and $u^\eps$ the solution to \eqref{DMNLSeps} with $u^\eps|_{t=0}=\varphi$.  

By the result of \cite{Dodson} and persistence of regularity for \eqref{DMNLS0}, we have that $u$ exists globally in time and satisfies
\[
\|\langle\nabla \rangle^s u\|_{S(\R)}\leq M <\infty,
\]
where $S(\cdot)$ is as in \eqref{def:S}.  

By the standard local well-posedness theory for \eqref{DMNLSeps} (see e.g. \cite{AntonelliSautSparber} or \cite{MVH}), we have that $u^\eps$ exists at least locally in time; furthermore, as long as $u^\eps$ remains bounded in the $S$-norm, we can continue the solution. In fact, the time of dependence for $u^\eps$ is independent of $\eps$, which can be deduced from the fact that the implicit constants in the Strichartz estimates for \eqref{DMNLSeps} are independent of $\eps$ (cf. \eqref{uniform-Strichartz}).

Thus, in what follows, it will suffice to assume that the solution $u^\eps$ exists and then establish \emph{a priori} bounds on the difference between $u$ and $u^\eps$.  Without loss of generality, we estimate the difference for times $t\in[0,\infty)$.  In general, the implicit constants below may depend on $\gamma$, but will be independent of $\eps$.  We will write $C_\gamma>0$ to explicitly indicate various implicit constants appearing in the Strichartz estimates below. 

Writing $F(z)=|z|^2 z$, we first use the Duhamel formula to obtain 
\begin{align}
u^\eps(t) - u(t) & = e^{i\Gamma_\eps(t,t_0)\Delta}[u^\eps(t_0)-u(t_0)] \label{dif-data1} \\
& \quad + [e^{i\Gamma_\eps(t,t_0)\Delta}-e^{i\langle\gamma\rangle(t-t_0)\Delta}]u(t_0) \label{dif-data2} \\
&\quad + \int_{t_0}^t e^{i\Gamma_\eps(t,s)\Delta}[F(u^\eps(s))-F(u(s))]\,ds \label{dif-non}\\
& \quad + \int_{t_0}^t [e^{i\Gamma_\eps(t,s)\Delta}-e^{i\langle\gamma\rangle(t-s)\Delta}]F(u(s))\,ds \label{dif-prop}
\end{align}
for any $t,t_0\in\R$. Thus, for any time interval $I\ni t_0$, we can use Corollary~\ref{C:CoP}, Strichartz (see \eqref{uniform-Strichartz}), and the fractional product rule to obtain the \emph{a priori} estimate
\begin{align*}
\|&u^\eps-u\|_{S} \\
& \lesssim \|u^\eps(t_0)-u(t_0)\|_{L^2} + \eps^c \|u\|_{L_t^\infty H_x^{\frac{s}{2}}} + \| F(u^\eps)-F(u)\|_{L_{t,x}^{\frac43}} + \eps^c \|\langle\nabla\rangle^{s}F(u)\|_{L_{t,x}^{\frac43}} \\
& \lesssim \|u^\eps(t_0)-u(t_0)\|_{L^2} + M\eps^c + \bigl\{ \|u^\eps\|_{L_{t,x}^4}^2+\|u\|_{L_{t,x}^4}^2\bigr\}\|u^\eps-u\|_{L_{t,x}^4} + \\
& \quad + \eps^c \| u\|_{L_{t,x}^4}^2 \|\langle \nabla\rangle^{s}u\|_{L_{t,x}^4} \\
& \lesssim \|u^\eps(t_0)-u(t_0)\|_{L^2} + M\eps^c  + \|u\|_{L_{t,x}^4}^2 \|u^\eps-u\|_{L_{t,x}^4} + \|u^\eps-u\|_{L_{t,x}^4}^3 \\
&\quad + M \eps^c \|u\|_{L_{t,x}^4}^2 \\
& \lesssim \|u^\eps(t_0)-u(t_0)\|_{L^2} + M\eps^c  + \|u\|_{L_{t,x}^4}^2 \|u^\eps-u\|_{S} + \|u^\eps-u\|_{S}^3 + M\eps^c \|u\|_{L_{t,x}^4}^2
\end{align*}
for some $c=c(s)>0$, where all space-time norms are over $I\times\R^2$.

We now let $\eta>0$ be a small parameter to be determined below (ultimately depending only on $\gamma$), and split $[0,\infty)$ into $J=J(M,\gamma)$ intervals $I_j=[t_j,t_{j+1}]$ such that
\begin{equation}\label{franged}
\|u\|_{L_{t,x}^4(I_j\times\R^2)}<\eta \qtq{for all}j\geq 0.
\end{equation}

We will prove by induction that for $\eps=\eps(M,\gamma,s)$ sufficiently small, 
\begin{equation}\label{induction}
\|u^\eps - u\|_{S(I_j)} \leq A_j \eps^c \qtq{for all}j\geq 0,
\end{equation}
where 
\[
A_0:=8C_\gamma M\qtq{and} A_{j}:=4C_\gamma[A_{j-1}+2M]\qtq{for} j\geq 1.
\]

Note that as $J=J(M,\gamma)$ is finite, we have that $A_j \lesssim_{\gamma,M} 1$ uniformly in $j$. 

For $j=0$, we note that $u^\eps(t_0)=u(t_0)=\varphi$, so that using \eqref{franged}, the \emph{a priori} estimate on an interval $I=[0,t]\subset I_0$ reduces to
\begin{align*}
\|u^\eps - u\|_{S(I)} \lesssim 2M\eps^c + \eta^2\|u^\eps-u\|_{S(I)} + \|u^\eps-u\|_{S(I)}^3. 
\end{align*}

Choosing $\eta=\eta(\gamma)$ small enough, this implies
\[
\|u^{\eps} - u\|_{S(I)} \leq 4C_\gamma M\eps^c + C_\gamma\| u^\eps-u\|_{S(I)}^3.
\]

As this holds for any $I=[0,t]\subset I_0$, a continuity argument then implies (for $\eps=\eps(\gamma,M,s)$ small enough) 
\[
\|u^{\eps}-u\|_{S(I_0)}\leq 8C_\gamma M \eps^c, 
\]
which is \eqref{induction} for $j=0$.

Now suppose that that \eqref{induction} holds up to level $j-1$.  Then, in particular, we have
\[
\|u^\eps(t_j)-u(t_j)\|_{L^2} \leq A_{j-1} \eps^c.
\]

Applying again the \emph{a priori} estimate above, we find that for any $I=[t_j,t]\subset I_j$, 
\[
\|u^\eps-u\|_{S(I)} \lesssim A_{j-1}\eps^c + 2M\eps^c + \eta^2\|u^\eps-u\|_{S(I)} + \|u^\eps-u\|_{S(I)}^3
\]
which (for $\eta$ small, as before) yields
\[
\|u^\eps-u\|_{S(I)} \leq 2C_\gamma[A_{j-1}+2M]\eps^c + C_\gamma\|u^\eps-u\|_{S(I)}^3 
\]

As long as $\eps=\eps(M,\gamma,s)$ is small enough (depending on $M$, $C_\gamma$, and $\sup_j A_j$), this yields
\[
\|u^{\eps}-u\|_{S(I_j)}\leq 4C_\gamma[A_{j-1}+2M]\eps^c=A_j\eps^c,
\]
which is \eqref{induction} at level $j$.  This completes the proof of \eqref{induction}.

Adding the estimates \eqref{induction} and recalling that $J=J(M,\gamma)$ is finite, we derive
\[
\|u^\eps-u\|_{S([0,\infty))} \lesssim_{\gamma,M} \eps^c,
\]
which yields the result. \end{proof}

\section{Proof of Theorem~\ref{T}(ii)} \label{S:ii}

Throughout this section we assume that $\gamma$ is an admissible dispersion map satisfying $\gamma(t)>0$ for all $t\in\R$.  In fact, by the definition of admissibility, this guarantees that $\gamma$ is bounded away from zero.

We define $\gamma_\eps(t)$ and $\Gamma_\eps(t,t_0)$ as in \eqref{gamma-eps}.  To simplify notation, we will abbreviate $\Gamma_\eps(t,0)$ by $\Gamma_\eps(t)$.  Note that $\gamma_1$ corresponds to the original dispersion map $\gamma$; accordingly, we will denote the function $\Gamma_1$ by $\Gamma$. 

The strategy in this section will be to utilize a change of variables as in \cite{Fanelli, GMO} to convert \eqref{DMNLSeps} into an equation with constant dispersion but time-dependent nonlinearity (see \eqref{NLSeps} below).  We then adapt the arguments of \cite{Ntekoume}, which considered the homogenization problem for $2d$ cubic NLS with spatially dependent nonlinearity, to obtain a homogenization result for \eqref{NLSeps}. Changing back to the original equation, we can complete the proof of Theorem~\ref{T}(ii). 

As $\gamma_\eps$ is bounded away from zero, the function $\Gamma_\eps$ is invertible.  We denote $c_\eps=\Gamma_\eps^{-1}$ and observe  that
\begin{equation}\label{gece}
\Gamma_\eps(c_\eps(t)) \equiv t \implies \gamma_\eps(c_\eps(t))c_\eps'(t) \equiv 1.
\end{equation}

In particular, one sees that if $u^\eps$ solves \eqref{DMNLSeps}, then
\[
w^\eps(t,x):=u^\eps(c_\eps(t),x)
\]
solves the equation
\begin{equation}\label{NLSeps}
i\partial_t w + \Delta w = c_\eps'(t)|w|^2 w.
\end{equation}

In the case of constant dispersion map $\gamma(t)\equiv\langle\gamma\rangle$, the same computations lead instead to the time-independent equation
\begin{equation}\label{NLS0}
i\partial_t w + \Delta w = \tfrac{1}{\langle\gamma\rangle} |w|^2 w.
\end{equation}

Note that the result of \cite{Dodson} implies that solutions to \eqref{NLS0} with $w|_{t=0}\in L^2$ are global and scatter in $L^2$, with $w\in S(\R)$.

We will prove the following homogenization result for \eqref{NLSeps}.

\begin{proposition}\label{P:w} Let $\varphi\in L^2$, and let $w\in S(\R)$ denote the solution to \eqref{NLS0} with $w|_{t=0}=\varphi$.  For all $\eps>0$ sufficiently small, the solution $w^\eps$ to \eqref{NLSeps} with $w^\eps|_{t=0}=\varphi$ is global in time, with $w^\eps\in S(\R)$ and
\[
\lim_{\eps\to 0}\|w^\eps-w\|_{S(\R)}=0. 
\]
\end{proposition}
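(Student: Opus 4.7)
The plan is to treat the solution $w$ of the averaged equation \eqref{NLS0} as an approximate solution of \eqref{NLSeps} and then invoke Proposition~\ref{L:stability}, chained across a finite partition of $\R$. First, since $\gamma$ is admissible and strictly positive, the coefficient $\mu_\eps(t):=c_\eps'(t)=1/\gamma_\eps(c_\eps(t))$ is bounded in $L_t^\infty$ uniformly in $\eps$. Antidifferentiation of $\mu_\eps-\tfrac{1}{\langle\gamma\rangle}$ yields
\[
G_\eps(s):=c_\eps(s)-\tfrac{s}{\langle\gamma\rangle},
\]
and applying Lemma~\ref{L:crucial-estimate} to the identity $\Gamma_\eps(c_\eps(s))=s$ gives $\|G_\eps\|_{L_t^\infty(\R)}\lesssim_\gamma\eps$.

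The core task is to show that the error $e(t):=[\mu_\eps(t)-\tfrac{1}{\langle\gamma\rangle}]|w|^2 w$ satisfies
\[
\lim_{\eps\to 0}\biggl\|\int_0^t e^{i(t-s)\Delta}e(s)\,ds\biggr\|_{L_{t,x}^4(\R\times\R^2)}=0.
\]
Since $w\in L_{t,x}^4(\R\times\R^2)$ by \cite{Dodson}, we have $|w|^2 w\in L_{t,x}^{4/3}(\R\times\R^2)$. For any $\eta'>0$, I would first use density to approximate $|w|^2 w$ by a test function $\phi\in C_c^\infty(\R\times\R^2)$ in $L_{t,x}^{4/3}$-norm. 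By the inhomogeneous Strichartz estimate together with the uniform $L^\infty$-bound on $\mu_\eps-\tfrac{1}{\langle\gamma\rangle}$, the contribution of $|w|^2 w-\phi$ to the Duhamel integral is controlled by $C\||w|^2 w-\phi\|_{L_{t,x}^{4/3}}$ and can thus be made $\leq\eta'/2$ by the choice of $\phi$ alone. For the piece with $\phi$ in place of $|w|^2 w$, integration by parts in $s$ (using $G_\eps(0)=0$) yields
\[
\int_0^t e^{i(t-s)\Delta}[\mu_\eps-\tfrac{1}{\langle\gamma\rangle}]\phi(s)\,ds = G_\eps(t)\phi(t)+\int_0^t e^{i(t-s)\Delta}G_\eps(s)[i\Delta\phi-\partial_s\phi](s)\,ds.
\]
With $\phi$ fixed and $\|G_\eps\|_\infty\lesssim\eps$, H\"older and Strichartz bound these by $\eps\|\phi\|_{L_{t,x}^4}$ and $\eps\|i\Delta\phi-\partial_s\phi\|_{L_{t,x}^{4/3}}$, respectively, both of which vanish as $\eps\to 0$ with $\phi$ held fixed.

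To conclude, I would partition $\R$ into finitely many subintervals $I_j$ on which $\|w\|_{L_{t,x}^4(I_j\times\R^2)}$ is less than the threshold required to apply Proposition~\ref{L:stability} with $\tilde u=w$, $u=w^\eps$, and $\mu=\mu_\eps$. Because $\|\mu_\eps\|_{L^\infty}$ is uniform in $\eps$, the small-parameter $\eta_1$ from Proposition~\ref{L:stability} can be chosen uniformly across the $I_j$ and uniformly in $\eps$. Starting from the matching initial condition $w^\eps(0)=w(0)=\varphi$, Proposition~\ref{L:stability} applies on $I_0$, producing global-in-$I_0$ existence of $w^\eps$ together with $\|w^\eps-w\|_{S(I_0)}\to 0$ as $\eps\to 0$. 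The resulting smallness of $\|w^\eps(t_1)-w(t_1)\|_{L^2}$ supplies the initial-data gap on $I_1$, and iterating through the finitely many intervals yields $w^\eps\in S(\R)$ and $\|w^\eps-w\|_{S(\R)}\to 0$. Reflecting about $t=0$ handles the negative times identically.

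The main delicate point is the interplay between the density approximation and the integration by parts: the norm $\|i\Delta\phi-\partial_s\phi\|_{L_{t,x}^{4/3}}$ blows up as $\phi\to|w|^2 w$, so one must fix $\phi$ first (to absorb the approximation error in $|w|^2 w-\phi$) and only then let $\eps\to 0$. Once this two-parameter ordering is in place, everything else is a routine consequence of the tools assembled in Section~\ref{S:preliminary}.
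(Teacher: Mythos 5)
Your proof is correct, and the core error estimate takes a genuinely different route from the paper's. Where the paper decomposes $F(w)$ via Littlewood--Paley, proves the high-frequency smallness $\|w_{>N}\|_{L_{t,x}^4}\lesssim\eta$ by comparing $w$ with the solution having truncated data $P_{\leq N_0}\varphi$ (persistence of regularity plus stability for \eqref{NLS0}), and then integrates by parts only on the low-frequency piece $F(w_{\leq N})$ --- using Bernstein and the equation satisfied by $w_{\leq N}$ to control $\partial_s F(w_{\leq N})$, arriving at the bound $\eta + N^2\eps$ --- you instead approximate $F(w)\in L_{t,x}^{4/3}$ directly by $\phi\in C_c^\infty$ and integrate by parts against the fixed smooth $\phi$. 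This is arguably cleaner: it eliminates the Littlewood--Paley machinery, the auxiliary solution $v$, and the need to use the equation to differentiate the nonlinearity in time (your $\partial_s\phi$ is controlled for free), and your explicit remark about fixing $\phi$ before sending $\eps\to 0$ is exactly the two-parameter ordering that the paper's choice of $N=N(\eta)$ followed by $\eps$ small encodes. Neither argument yields a quantitative rate in $\eps$ here (the paper's $N_0=N_0(\varphi,\eta)$ is likewise non-quantitative), so nothing is lost by your density step.

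One point to tighten: your final chaining of Proposition~\ref{L:stability} over intervals $I_j$ with $\|w\|_{L_{t,x}^4(I_j\times\R^2)}$ small is redundant, since the proposition as stated permits arbitrary $L$, with $\eta_1=\eta_1(M,M',L,\|\mu\|_{L^\infty})$ uniform in $\eps$; a single application on all of $\R$ suffices, which is what the paper does (the subdivision into small-$L^4$ windows happens inside the proof of the stability theorem, as in Ntekoume). If you do chain by hand, you must verify the error hypothesis with initial time $t_j$, namely a bound on $\bigl\|\int_{t_j}^t e^{i(t-s)\Delta}e(s)\,ds\bigr\|_{L_{t,x}^4(I_j\times\R^2)}$, whereas your estimate concerns the Duhamel integral started at $0$, and the discrepancy is not directly controlled by it. This is fixable by rerunning your integration by parts from $s=t_j$: the only new term is the boundary contribution $G_\eps(t_j)e^{i(t-t_j)\Delta}\phi(t_j)$, which Strichartz bounds by $\eps\|\phi(t_j)\|_{L_x^2}\lesssim \eps\,C(\phi)$, so the argument survives --- but the step should be stated.
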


\begin{proof} We follow the strategy in \cite{Ntekoume} and prove that $w$ is an approximate solution to \eqref{NLS0}. In particular, given $\eta>0$ small, we need to show that for all $\eps>0$ sufficiently small, we have
\begin{equation}\label{NTS}
\biggl\| \int_0^t e^{i(t-s)\Delta}[c_\eps'(s)-\tfrac{1}{\langle\gamma\rangle}]F(w(s))\,ds \biggr\|_{L_{t,x}^4(\R\times\R^2)} \lesssim \eta,
\end{equation}
where we denote $F(z)=|z|^2z$ and allow the implicit constants to depend on $\gamma$ and $\|w\|_{S(\R)}$.  We will show below that $c_\eps(s)-\tfrac{s}{\langle\gamma\rangle}=\mathcal{O}(\eps)$ uniformly in $s$, a fact we can exploit after an integration by parts in time. However, this produces derivatives acting on $F(w)$, which we can control only if we first restrict to low frequencies.  Thus, we will proceed by decomposing
\begin{equation}\label{decompose}
F(w) = [F(w)-F(w_{\leq N})] + F(w_{\leq N})= \text{O}(w_{>N}w^2) + F(w_{\leq N})
\end{equation}
for some $N\gg 1$. We will then show that the contribution of high frequencies can be made as small as we wish, and only use integration by parts on the low frequency piece.

We begin with the contribution of the $\O(w_{>N}w^2)$ term.  The key will be to prove the following:
\begin{equation}\label{high-freqs-small}
\|w_{>N}\|_{L_{t,x}^4(\R\times\R^2)} \lesssim \eta\qtq{for}N\qtq{sufficiently large.}
\end{equation}

To see this, first choose $N_0>0$ large enough that
\[
\|P_{>N_0}\varphi\|_{L^2} <\eta. 
\]

We then let $v$ denote the solution to \eqref{NLS0} with initial data $P_{\leq N_0}\varphi$.  In particular, by the scattering theory and persistence of regularity for \eqref{NLS0}, we have that $v$ is global in time and satisfies
\[
\|\langle\nabla\rangle v\|_{S(\R)}\lesssim N_0.
\]

Moreover, as
\[
\|w|_{t=0}-v|_{t=0}\|_{L^2}=\|P_{>N_0}\varphi\|_{L^2}<\eta, 
\]
the stability theory for \eqref{NLS0} implies that
\[
\|w-v\|_{L_{t,x}^4} \lesssim \eta
\]
provided $\eta$ is sufficiently small. Thus, by Bernstein's inequalities,
\begin{align*}
\|w_{>N}\|_{L_{t,x}^4} & \lesssim \|w-v\|_{L_{t,x}^4} + N^{-1}\|\nabla v\|_{L_{t,x}^4} \lesssim \eta + N^{-1}N_0,
\end{align*}
which yields \eqref{high-freqs-small} for $N\geq N_0\eta^{-1}$.

Using \eqref{high-freqs-small}, Strichartz, and \eqref{gece} (which implies $c_\eps' \in L_t^\infty$, with bound independent of $\eps$), we can now estimate
\begin{equation}\label{high-freq-part}
\begin{aligned}
\biggl\|  \int_0^t e^{i(t-s)\Delta}[c_\eps'(s)&-\tfrac{1}{\langle\gamma\rangle}]\text{\O}(w_{>N}w^2)(s)\,ds \biggr\|_{L_{t,x}^4(\R\times\R^2)} \\
& \lesssim\|c_\eps'-\tfrac{1}{\langle\gamma\rangle}\|_{L^\infty} \|w_{>N}\|_{L_{t,x}^4} \|w\|_{L_{t,x}^4}^2 \lesssim \eta,
\end{aligned}
\end{equation}
which is acceptable. 

We turn to the contribution of the $F(w_{\leq N})$ term in \eqref{decompose}.  We will prove 
\begin{equation}\label{low-freq-part}
\biggl\| \int_0^t e^{i(t-s)\Delta}\tfrac{d}{ds}[c_\eps(s)-\tfrac{s}{\langle \gamma\rangle}]\cdot F(w_{\leq N}(s))\,ds\biggr\|_{L_{t,x}^4(\R\times\R^2)}\lesssim N^{2}\eps.
\end{equation}
Integrating by parts with respect to time, we first write
\begin{align}
\int_0^t & e^{i(t-s)\Delta}\tfrac{d}{ds}[c_\eps(s)-\tfrac{s}{\langle \gamma\rangle}] \cdot F(w_{\leq N}(s))\,ds \nonumber \\
& = [c_\eps(t)-\tfrac{t}{\langle\gamma\rangle}] F(w_{\leq N}(t)) \label{boundary-term} \\
& \quad -i\int_0^t e^{i(t-s)\Delta}[c_\eps(s)-\tfrac{s}{\langle\gamma\rangle}](i\partial_s + \Delta)F(w_{\leq N}(s))\,ds \label{derivative-term}, 
\end{align}
where we have used the fact that $c_\eps(0)=0$.  Thus it suffices to estimate the contribution of \eqref{boundary-term} and \eqref{derivative-term}.  For each of these terms, the key will be to use the following bound:
\begin{equation}\label{eps-gain}
\|c_\eps(t)-\tfrac{t}{\langle\gamma\rangle}\|_{L_t^\infty} \lesssim_\gamma \eps,
\end{equation}
which we may prove as follows: by Lemma~\ref{L:crucial-estimate} and the definition of $c_\eps$, we have
\begin{align*}
|c_\eps(\Gamma_\eps(t)) - \tfrac{1}{\langle\gamma\rangle}\Gamma_\eps(t)| & = \tfrac{1}{\langle\gamma\rangle}|\langle \gamma\rangle t - \Gamma_\eps(t)| \lesssim_\gamma \eps
\end{align*}
uniformly in $t\in\R$. As $\Gamma:\R\to\R$ is surjective, the estimate \eqref{eps-gain} follows.

We turn to the estimate of \eqref{boundary-term} and \eqref{derivative-term}.  For \eqref{boundary-term}, we use Bernstein's inequality (with the fact that $F(w_{\leq N})$ is still localized to frequencies $\lesssim N$) to estimate
\begin{align*}
\|[c_\eps(t)-\tfrac{t}{\langle\gamma\rangle}] F(w_{\leq N})\|_{L_{t,x}^4} & \lesssim \|c_\eps(t)-\tfrac{t}{\langle\gamma\rangle}\|_{L_t^\infty} \| F(w_{\leq N})\|_{L_{t,x}^4} \\
& \lesssim \eps N^{\frac32} \| F(w_{\leq N})\|_{L_t^4 L_x^1} \\ &\lesssim \eps N^{\frac32} \|w\|_{L_t^\infty L_x^2}\|w_{\leq N}\|_{L_t^\infty L_x^4} \|w\|_{L_{t,x}^4} \\
& \lesssim  \eps N^{2}\|w\|_{L_t^\infty L_x^2}^2 \|w\|_{L_{t,x}^4} \lesssim \eps N^{2},
\end{align*}
which is acceptable.

For \eqref{derivative-term}, we apply Strichartz and \eqref{eps-gain} and find that it will suffice to prove
\[
\|\partial_s F(w_{\leq N})\|_{L_{t,x}^\frac{4}{3}} + \| \Delta F(w_{\leq N})\|_{L_{t,x}^{\frac{4}{3}}} \lesssim N^2. 
\]

For the second term, we use the frequency localization and H\"older's inequality to obtain
\[
\|\Delta F(w_{\leq N})\|_{L_{t,x}^{\frac43}} \lesssim N^2\|w\|_{L_{t,x}^4}^3 \lesssim N^2,
\]
which is acceptable.  For the first term, we use the fact that $w_{\leq N}$ solves
\[
i\partial_t w_{\leq N} = - \Delta w_{\leq N} + \tfrac{1}{\langle \gamma\rangle} P_{\leq N}(|w|^2 w)
\]
and Bernstein and H\"older to estimate as follows:
\begin{align*}
\|\partial_s F(w_{\leq N})\|_{L_{t,x}^{\frac43}} & \lesssim \| w_{\leq N}^2 \Delta w_{\leq N}\|_{L_{t,x}^{\frac43}} + \|w_{\leq N}^2 P_{\leq N}(|w|^2 w)\|_{L_{t,x}^{\frac43}} \\
& \lesssim N^2\|w\|_{L_{t,x}^4}^3 + \|w_{\leq N}\|_{L_{t,x}^\infty}^2\|P_{\leq N}(|w|^2 w)\|_{L_{t,x}^{\frac43}} \\
& \lesssim N^2+ N^2\|w\|_{L_t^\infty L_x^2}^2\|w\|_{L_{t,x}^4}^3 \lesssim N^2,
\end{align*}
which is acceptable.  This completes the proof of \eqref{low-freq-part}.

Combining \eqref{high-freq-part} and \eqref{low-freq-part}, we obtain
\[
\biggl\| \int_0^t e^{i(t-s)\Delta}[c_\eps'(s)-\tfrac{1}{\langle\gamma\rangle}] F(w(s))\,ds \biggr\|_{L_{t,x}^4} \lesssim \eta + N^2\eps.
\]
Choosing $\eps$ sufficiently small, we therefore obtain \eqref{NTS}. 

With \eqref{NTS} in place, we can now apply the stability result (Proposition~\ref{L:stability}) to obtain that
\[
\|w^\eps-w\|_{S(\R)} \lesssim \eta
\]
for all $\eps$ sufficiently small.  As $\eta>0$ was arbitrary, this completes the proof of Proposition~\ref{P:w}. \end{proof}

It remains to show that Proposition~\ref{P:w} implies the desired averaging result appearing in Theorem~\ref{T}(ii).

\begin{proof}[Proof of Theorem~\ref{T}(ii)] We define $u$ and $u^\eps$ as in the statement of Theorem~\ref{T}(ii), and define the corresponding solutions $w$ and $w^\eps$ to \eqref{NLS0} and \eqref{NLSeps} as in Proposition~\ref{P:w}.  In particular, Proposition~\ref{P:w} guarantees that the $u^\eps$ are global in time for small enough $\eps>0$ and satisfy
\begin{equation}\label{converge0}
\lim_{\eps\to 0} \|u^\eps(c_\eps(t),\cdot) - u(\tfrac{t}{\langle\gamma\rangle},\cdot) \|_{S(\R)} = 0.
\end{equation}

We will prove that 
\begin{equation}\label{converge1}
\lim_{\eps\to 0}\|u(c_\eps(t),\cdot)-u(\tfrac{t}{\langle\gamma\rangle},\cdot)\|_{S(\R)} = 0,
\end{equation}
so that by the triangle inequality we obtain 
\[
\lim_{\eps\to 0} \|u^\eps(c_\eps(t),\cdot)-u(c_\eps(t),\cdot)\|_{S(\R)}=0.
\]

As $c_\eps:\R\to\R$ is invertible and \eqref{gece} guarantees that $\tfrac{1}{c_\eps'}\in L_t^\infty$ (uniformly in $\eps$), a change of variables then implies the desired convergence \eqref{convergence!}.

It therefore remains to establish \eqref{converge1}.  We let $\eta>0$ and consider the $L_t^\infty L_x^2$- and $L_{t,x}^4$-norms separately.

First, for the $L_{t,x}^4$-norm, we choose $\psi\in C_c^\infty(\R\times\R^2)$ so that
\[
\|u-\psi\|_{L_{t,x}^4(\R\times\R^2)}<\eta.
\]

Then, using a change of variables, the fundamental theorem of calculus, and \eqref{eps-gain}, we have
\begin{align*}
\|u(c_\eps(t))-u(\tfrac{t}{\langle \gamma\rangle})\|_{L_{t,x}^4} & \lesssim \|\psi(c_\eps(t))-\psi(\tfrac{t}{\langle\gamma\rangle})\|_{L_{t,x}^4}+\eta \\
& \lesssim \|c_\eps(t)-\tfrac{t}{\langle\gamma\rangle}\|_{L_t^\infty} \|\partial_t\psi\|_{L_{t,x}^\infty}\|1\|_{L_{t,x}^4(\text{supp}(\psi))}+\eta \\
& \lesssim C(\psi)\eps + \eta.
\end{align*}

Next, for the $L_t^\infty L_x^2$-norm, we first argue as we did for \eqref{high-freqs-small} above to find $N=N(\eta)$ sufficiently large that
\[
\|u_{>N}\|_{L_{t,x}^4}\lesssim \eta. 
\]

Writing $I_\eps(t)$ for the interval between $c_\eps(t)$ and $\tfrac{t}{\langle \gamma\rangle}$, we use the Duhamel formula, \eqref{decompose}, Strichartz estimates, Bernstein estimates, and \eqref{eps-gain} to obtain 
\begin{align*}
\|u(c_\eps(t))-u(\tfrac{t}{\langle\gamma\rangle})\|_{L_x^2} & = \tfrac{1}{\langle\gamma\rangle}\biggl\| \int_{I_\eps(t)} e^{-is\Delta}[\text{\O}(u_{>N}u^2)+F(u_{\leq N})]\,ds\biggr\|_{L_x^2} \\
& \lesssim \|u_{>N}\|_{L_{t,x}^4}\|u\|_{L_{t,x}^4}^2 + \|F(u_{\leq N})\|_{L_{t,x}^{\frac43}(I_\eps(t))} \\
& \lesssim \eta + |I_\eps(t)|^{\frac34} \|u_{\leq N}\|_{L_t^\infty L_x^4}^3 \\
& \lesssim \eta + \eps^{\frac34} N^{\frac32}\|u\|_{L_t^\infty L_x^2}^3 \lesssim \eta + \eps^{\frac34}N^{\frac32} 
\end{align*}
uniformly in $t$.

Combining the estimates above, we find
\[
\|u(c_\eps(t),\cdot)-u(\tfrac{t}{\langle\gamma\rangle},\cdot)\|_{S(\R)} \lesssim \eta + C(\psi)\eps + \eps^{\frac34}N^{\frac32}.
\]

Choosing $\eps$ sufficiently small, we deduce 
\[
\|u(c_\eps(t),\cdot)-u(\tfrac{t}{\langle\gamma\rangle},\cdot)\|_{S(\R)} \lesssim \eta.
\]

As $\eta>0$ was arbitrary, we obtain \eqref{converge0}, as desired. \end{proof}


\end{document}